\documentclass[pdflatex,sn-mathphys-num]{sn-jnl}

\usepackage{amsmath,amssymb,amsfonts}
\usepackage{amsthm}
\usepackage{graphicx}
\usepackage{silence}
\WarningFilter{caption}{Unknown document class}
\WarningFilter{hyperref}{Difference}
\PassOptionsToPackage{compatibility=false}{caption}
\usepackage{subcaption}
\usepackage{tikz}

\theoremstyle{thmstyleone}

\newtheorem{prop}{Proposition}
\newtheorem{lem}{Lemma}

\theoremstyle{thmstylethree}

\newtheorem*{theoremA}{Theorem A}

\begin{document}

\title 
{
Deterministic Structure of Vertical
Configurations in Minimal Picker Tours
for Rectangular Warehouses
}

\author*[1]{\fnm{George} \sur{Dunn}}\email{george.dunn@uon.edu.au}
\author[1]{\fnm{Elizabeth} \sur{Stojanovski}}
\author[1]{\fnm{Bishnu} \sur{Lamichhane}}
\author[2]{\fnm{Hadi} \sur{Charkhgard}}
\author[3,4]{\fnm{Ali} \sur{Eshragh}}

\affil*[1]{\orgdiv{School of Information and Physical Sciences}, \orgname{University of Newcastle}, \orgaddress{\state{NSW}, \country{Australia}}}
\affil[2]{\orgdiv{Department of Industrial and Management Systems Engineering}, \orgname{University of South Florida}, \orgaddress{\state{FL}, \country{USA}}}
\affil[3]{\orgdiv{Carey Business School}, \orgname{Johns Hopkins University}, \orgaddress{\state{MD}, \country{USA}}}
\affil[4]{\orgdiv{International Computer Science Institute}, \orgname{University of California at Berkeley}, \orgaddress{\state{CA}, \country{USA}}}

\abstract{
The picker routing problem seeks the shortest tour
through a warehouse that visits every item
in a given pick-list and returns to the depot.
For rectangular warehouses,
dynamic programming algorithms solve this problem
by sequentially evaluating combinations of
vertical edge configurations within subaisles
and horizontal edge configurations between aisles.
These methods proceed through stages one after another,
but how those stages
relate to each other has received limited
structural analysis.
Building on our recent structural result for rectangular
warehouses, which shows that connecting
double traversals are not required to maintain tour
connectivity, we prove that for rectangular warehouses of any size, the horizontal edge
structure of a minimal tour subgraph uniquely
determines the required vertical edge configurations.
The proof uses a case analysis on horizontal degree along each aisle and at
merged-segment endpoints, showing that the admissible vertical pattern in each
regime is uniquely determined by Eulerian parity and by minimizing traversal
length.
This deterministic relationship implies that
vertical configuration stages in existing
dynamic programming algorithms can be replaced
by a direct inference step,
reducing the combinatorial complexity
of the problem and providing a structural
foundation for developing more efficient
exact methods for warehouse layouts of any size.
}

\keywords{
    Order picking,
    Warehouse optimization,
    Routing problem,
    Dynamic programming,
    Tour subgraph,
    Eulerian conditions
}

\pacs[MSC Classification]{90B06, 90C27, 05C45}

\maketitle


\section{Introduction}
\label{sec:introduction}

Order picking is the process of collecting goods
in a warehouse to fulfill customer orders.
The picker routing problem seeks the shortest tour that
visits all required item locations and returns to a depot.
For single-block parallel-aisle warehouses,
\citet{ratliff1983order} proposed a dynamic programming algorithm,
later extended to two-block layouts by
\citet{roodbergen2001routing} and to general multi-block
warehouses by \citet{pansart2018exact}.
These algorithms construct optimal tours by sequentially
evaluating combinations of vertical edge configurations within
subaisles and horizontal connections between aisles.

We consider a rectangular warehouse with
a single depot,
$m \geq 1$ vertical aisles, and
$n \geq 2$ horizontal cross-aisles.
As the warehouse is rectangular,
the distances between adjacent aisles
and between adjacent cross-aisles are uniform.
The cross-aisles divide each aisle into subaisles,
which contain the stored items and
are assumed to be sufficiently
narrow such that the horizontal distance
to traverse them is negligible.
The warehouse can be represented as a graph
$G = (V \cup P, E)$
as illustrated in Figure~\ref{fig:figure_warehouse_graph},
with vertices $v_{i,j} \in V$
at the intersection of aisle $i \in \{0,\ldots,m-1\}$ and
cross-aisle $j \in \{0,\ldots,n-1\}$,
respectively.
The set of vertices,
$P = \{p_0, p_1, \ldots, p_q\}$,
represents the locations to be visited,
with $p_0$ as the depot and
$p_1, \ldots, p_q$ the products to be collected.
Figure~\ref{fig:warehouse} illustrates an instance with $q=9$.
Only $p_0$ can be located at a $v_{i,j}$ vertex,
while all other vertices of $P$
are located within the subaisles.

\begin{figure}[ht]
    \centering
        \begin{minipage}{0.48\textwidth}
            \centering
            \resizebox{\textwidth}{!}
            {
                \begin{tikzpicture}[shorten >=1pt,draw=black!50]

                    \pgfmathsetmacro{\x}{2.5}
                    \pgfmathsetmacro{\y}{1.2}
                    \pgfmathsetmacro{\nodesize}{26}
                
                    \draw[thick, double, double distance between line centers=5pt]
                    (0, 0) -- (3 * \x, 0);
                
                    \draw[thick, double, double distance between line centers=5pt]
                    (0, 4 * \y) -- (3 * \x, 4 * \y);
                
                    \draw[thick, double, double distance between line centers=5pt]
                    (0, 8 * \y) -- (3 * \x, 8 * \y);
                
                    \foreach \name / \a in {0,...,3}{
                        \node[shape=circle,draw=black, minimum size=\nodesize pt, fill=white]
                        (A-\name) at (\x * \a, 8 * \y) {};
                        \node[align=center] at (A-\name) {$v_{\name, 2}$};
                        
                        \node[shape=circle,draw=black, minimum size=\nodesize pt, fill=white]
                        (B-\name) at (\x * \a, 4 * \y) {};
                        \node[align=center] at (B-\name) {$v_{\name, 1}$};
                
                        \node[shape=circle,draw=black, minimum size=\nodesize pt, fill=white]
                        (C-\name) at (\x * \a, 0) {};
                        \node[align=center] at (C-\name) {$v_{\name, 0}$};
                
                        \draw[thick, double, double distance between line centers=5pt]
                        (A-\name) -- (B-\name) -- (C-\name);
                        }
                
                    \node[align=center] at (0.55, 0.55) {$p_0$};
                
                    \node[circle, draw=black, minimum size=\nodesize pt, fill=white]
                    (item1) at (0, 1 * \y) {};
                    \node[align=center] at (item1) {$p_1$};
                
                    \node[circle, draw=black, minimum size=\nodesize pt, fill=white]
                    (item2) at (0, 6 * \y) {};
                    \node[align=center] at (item2) {$p_2$};
                
                    \node[circle, draw=black, minimum size=\nodesize pt, fill=white]
                    (item3) at (1 * \x, 1 * \y) {};
                    \node[align=center] at (item3) {$p_3$};
                
                    \node[circle, draw=black, minimum size=\nodesize pt, fill=white]
                    (item4) at (1 * \x, 3 * \y) {};
                    \node[align=center] at (item4) {$p_4$};
                
                    \node[circle, draw=black, minimum size=\nodesize pt, fill=white]
                    (item5) at (1 * \x, 6 * \y) {};
                    \node[align=center] at (item5) {$p_5$};
                
                    \node[circle, draw=black, minimum size=\nodesize pt, fill=white]
                    (item6) at (2 * \x, 1 * \y) {};
                    \node[align=center] at (item6) {$p_6$};
                
                    \node[circle, draw=black, minimum size=\nodesize pt, fill=white]
                    (item7) at (2 * \x, 5 * \y) {};
                    \node[align=center] at (item7) {$p_7$};
                
                    \node[circle, draw=black, minimum size=\nodesize pt, fill=white]
                    (item8) at (2 * \x, 7 * \y) {};
                    \node[align=center] at (item8) {$p_8$};
                
                    \node[circle, draw=black, minimum size=\nodesize pt, fill=white]
                    (item9) at (3 * \x, 7 * \y) {};
                    \node[align=center] at (item9) {$p_9$};
                
                    \node (left) at (-0.5 * \x, 0) { };
                    \node (right) at (3.5 * \x, 9 * \y) { };
                
                    \node[align=center, rotate=90] (aisle) at (-0.4 * \x, 4 * \y) {\large Aisle};
                    \draw[->] (aisle) -- (-0.4 * \x, 0);
                    \draw[->] (aisle) -- (-0.4 * \x, 8 * \y);
                
                    \node[align=center, rotate=90] (subaisle) at (3.4 * \x, 2 * \y) {\large Subaisle};
                    \draw[->] (subaisle) -- (3.4 * \x, 0);
                    \draw[->] (subaisle) -- (3.4 * \x, 4 * \y);
                
                    \node[align=center] (cross) at (1.5 * \x, 8.75 * \y) {\large Cross-Aisle};
                    \draw[->] (cross) -- (0, 8.75 * \y);
                    \draw[->] (cross) -- (3 * \x, 8.75 * \y);
                
                \end{tikzpicture}
            }
            \subcaption{Warehouse graph $G$}   
            \label{fig:figure_warehouse_graph}
        \end{minipage}
        \hfill
        \begin{minipage}{0.48\textwidth}
            \centering
            \resizebox{\textwidth}{!}
            {
                \begin{tikzpicture}[shorten >=1pt,draw=black!50]

                    \pgfmathsetmacro{\x}{2.5}
                    \pgfmathsetmacro{\y}{1.2}
                    \pgfmathsetmacro{\nodesize}{26}
                
                    \foreach \name / \a in {0,...,3}{
                        \node[shape=circle,draw=black, minimum size=\nodesize pt, fill=white]
                        (A-\name) at (\x * \a, 8 * \y) {};
                        \node[align=center] at (A-\name) {$v_{\name, 2}$};
                        
                        \node[shape=circle,draw=black, minimum size=\nodesize pt, fill=white]
                        (B-\name) at (\x * \a, 4 * \y) {};
                        \node[align=center] at (B-\name) {$v_{\name, 1}$};
                
                        \node[shape=circle,draw=black, minimum size=\nodesize pt, fill=white]
                        (C-\name) at (\x * \a, 0) {};
                        \node[align=center] at (C-\name) {$v_{\name, 0}$};
                
                        }
                
                    \node[align=center] at (0.55, 0.55) {$p_0$};
                
                    \node[circle, draw=black, minimum size=\nodesize pt, fill=white]
                    (item1) at (0, 1 * \y) {};
                    \node[align=center] at (item1) {$p_1$};
                
                    \node[circle, draw=black, minimum size=\nodesize pt, fill=white]
                    (item2) at (0, 6 * \y) {};
                    \node[align=center] at (item2) {$p_2$};
                
                    \node[circle, draw=black, minimum size=\nodesize pt, fill=white]
                    (item3) at (1 * \x, 1 * \y) {};
                    \node[align=center] at (item3) {$p_3$};
                
                    \node[circle, draw=black, minimum size=\nodesize pt, fill=white]
                    (item4) at (1 * \x, 3 * \y) {};
                    \node[align=center] at (item4) {$p_4$};
                
                    \node[circle, draw=black, minimum size=\nodesize pt, fill=white]
                    (item5) at (1 * \x, 6 * \y) {};
                    \node[align=center] at (item5) {$p_5$};
                
                    \node[circle, draw=black, minimum size=\nodesize pt, fill=white]
                    (item6) at (2 * \x, 1 * \y) {};
                    \node[align=center] at (item6) {$p_6$};
                
                    \node[circle, draw=black, minimum size=\nodesize pt, fill=white]
                    (item7) at (2 * \x, 5 * \y) {};
                    \node[align=center] at (item7) {$p_7$};
                
                    \node[circle, draw=black, minimum size=\nodesize pt, fill=white]
                    (item8) at (2 * \x, 7 * \y) {};
                    \node[align=center] at (item8) {$p_8$};
                
                    \node[circle, draw=black, minimum size=\nodesize pt, fill=white]
                    (item9) at (3 * \x, 7 * \y) {};
                    \node[align=center] at (item9) {$p_9$};
                
                    \draw[-, draw=black, thick]
                    (C-0) -- (item1) -- (B-0) -- (B-1) -- (item5) --
                    (A-1) -- (A-2) -- (item8) -- (item7) --
                    (B-2) -- (item6) -- (C-2) -- (C-1) -- (C-0);
                
                    \draw[-, draw=black, thick, double, double distance between line centers=5pt]
                    (B-0) -- (item2);
                
                    \draw[-, draw=black, thick, double, double distance between line centers=5pt]
                    (B-1) -- (item4);
                
                    \draw[-, draw=black, thick, double, double distance between line centers=5pt]
                    (C-1) -- (item3);
                
                    \draw[-, draw=black, thick, double, double distance between line centers=5pt]
                    (A-2) -- (A-3) -- (item9);
                
                    \node (left) at (-0.5 * \x, 0) { };
                    \node (right) at (3.5 * \x, 9 * \y) { };
                
                \end{tikzpicture}
            }
            \subcaption{Tour subgraph $T$}   
            \label{fig:figure_warehouse_tour}
        \end{minipage}
    \caption{A rectangular warehouse with $m=4$ aisles, $n=3$ cross-aisles,
    and pick locations $p_0, \ldots, p_9$.
    (a)~The warehouse represented as a graph $G = (V \cup P, E)$ and
    (b)~an example tour subgraph $T$}
    \label{fig:warehouse}
    \end{figure}

A subgraph $T \subset G$ is a tour subgraph if it contains
all vertices $p_i \in P$ and there exists an order picking
tour that uses each edge in $T$ exactly once.
Figure~\ref{fig:figure_warehouse_tour} shows an example of
a tour subgraph for the graph
presented in Figure~\ref{fig:figure_warehouse_graph}.
The problem of finding an optimal order picking tour
can therefore be solved by finding a tour subgraph
with the minimum total edge length.
The following theorem defines the characteristics of a tour subgraph
\citep{ratliff1983order, christofides1975graph}.

\begin{theoremA}[Ratliff and Rosenthal, 1983]
\label{thm:subtour}
A subgraph $T \subseteq G$
is a tour subgraph if and only if:
\begin{enumerate}[(i)]
    \item Every vertex in $P$ is a vertex of $T$;
    \item $T$ is connected; and 
    \item Every vertex in $T$ has an even degree.
\end{enumerate}
\end{theoremA}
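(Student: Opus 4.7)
The plan is to recognize this theorem as a translation of the classical Eulerian circuit theorem into the warehouse-routing setting: a connected multigraph admits an Eulerian circuit if and only if every vertex has even degree. Since an order picking tour is, by definition, a closed walk on $T$ starting and ending at the depot $p_0$ and traversing each edge of $T$ exactly once, it is precisely an Eulerian circuit of $T$ through $p_0$. With that identification, conditions (ii) and (iii) are exactly the hypotheses of Euler's theorem, and (i) is the only extra input required to guarantee that the tour actually visits every location in $P$.

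For the necessity direction I would assume $T$ is a tour subgraph and let $W$ denote an order picking tour using each edge of $T$ exactly once. Condition (i) is immediate from the definition of a tour subgraph. For (iii), I would pair at each vertex $v$ the incoming and outgoing edges along $W$; because $W$ closes up, every entry into $v$ is matched by an exit, so the degree of $v$ in $T$ is even. For (ii), I would note that $W$ is a single walk, so every vertex incident to an edge of $T$ lies in the connected component of $p_0$, and by (i) the remaining vertices of $T$ (necessarily elements of $P$) are visited by $W$ and hence lie in the same component.

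For the sufficiency direction I would invoke Euler's theorem on $T$ itself: conditions (ii) and (iii) together guarantee the existence of an Eulerian circuit $C$ in $T$. Because $p_0 \in T$ by (i), I would cyclically rotate $C$ so that it begins and ends at $p_0$; the resulting closed walk uses every edge of $T$ exactly once and, again by (i), passes through every $p_i$, so it is an order picking tour and $T$ is a tour subgraph.

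The main obstacle is really just bookkeeping around degenerate cases: when $T$ reduces to the single vertex $p_0$ (an empty pick-list) the statement is vacuous, and an isolated $p_i \in T$ would immediately contradict (ii) whenever $T$ has more than one vertex. Beyond these trivial checks, no combinatorial ingredient beyond Euler's classical theorem is required, which is why the argument in \citet{ratliff1983order} and \citet{christofides1975graph} is essentially a one-line appeal to that result.
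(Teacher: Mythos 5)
Your proof is correct and follows the same route the paper relies on: the paper states this result without proof, citing Ratliff and Rosenthal (1983) and Christofides (1975), where the argument is exactly the reduction to Euler's theorem (conditions (ii)--(iii) give an Eulerian circuit, rotated to start at $p_0$, and (i) ensures every pick location is visited). Your handling of the degenerate and isolated-vertex cases is a harmless extra check and introduces no discrepancy.
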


In this paper,
we show that in rectangular warehouses,
the choice of horizontal edges alone suffices
to determine the minimal set of vertical configurations
in a minimum-length tour.
This result relies on our recent finding that, in rectangular
warehouses, a double traversal
is never required to maintain connectivity of a minimal tour subgraph
\citep{dunn2025double}.
As a result, once the horizontal structure is fixed, the Eulerian and
minimality constraints uniquely pin down the vertical configuration.
This structural result eliminates the need to explore
vertical and horizontal configurations jointly,
reducing the combinatorial complexity of the problem.

Beyond computational speedups, deterministic structural descriptions are
useful for both algorithm design and rigorous modeling.
When the horizontal edge structure fixes the vertical structure in minimal
tours, the remaining decisions can be formulated with fewer degrees of
freedom; dynamic programs and exact enumeration can avoid jointly branching
over vertical patterns; and mathematical programming models can encode the
constraint structure directly \citep{goeke2021modeling}.
Such structural constraints can also support correctness proofs,
help derive lower bounds, and connect warehouse routing to broader
Eulerian and traveling-salesman-type graph phenomena.

Our technical contributions are Lemma~\ref{lem:merging}, which extends the six
vertical patterns to merged subaisle segments, and
Proposition~\ref{prop:determinacy}, which shows that horizontal edges incident to
an aisle uniquely determine its vertical configuration in a minimal tour
subgraph of a rectangular warehouse (building on the exclusion of connecting
double traversals from \citet{dunn2025double}).
Section~\ref{sec:preliminaries} introduces the
edge configurations, notation, and a subaisle merging lemma
needed for the analysis.
Section~\ref{sec:main_result} presents the main result
and its proof.
Section~\ref{sec:conclusion} discusses the implications
and concludes this paper.


\section{Preliminaries}
\label{sec:preliminaries}

For a minimal tour subgraph,
there are only six possible vertical edge configurations
within each subaisle and three horizontal edge configurations
between adjacent aisles at each cross-aisle
(no edge, one edge, or two parallel edges),
as shown in Figure~\ref{fig:vertical_action}
and Figure~\ref{fig:horizontal_action},
respectively
\citep{ratliff1983order, roodbergen2001routing, pansart2018exact}.
\citet{revenant2025note} showed that full double traversals
of a subaisle are
not required for single-block warehouses.
For rectangular warehouses of any size,
\citet{dunn2025double} demonstrated that double traversals
are not required to
maintain connectivity of a minimal tour subgraph.

\begin{figure}[ht]
    \centering
        \begin{minipage}{0.49\textwidth}
            \centering
            \resizebox{\textwidth}{!}
            {
                \begin{tikzpicture}[shorten >=1pt,->,draw=black!50, node distance=1.2cm]

                    \tikzset{minimum size=34pt}
                    
                        \node[shape=circle,draw=black] (a1) at (0, 5) { };
                        \node at (a1) {$v_{i,j+1}$};
                        \node[shape=circle,draw=black] (d1) at (0, 3.5) { };
                        \node[shape=circle,draw=black] (c1) at (0, 1.5) { };
                        \node[shape=circle,draw=black] (b1) at (0, 0) {$v_{i,j}$}; 
                        \draw[-, thick, draw=black] (a1) -- (d1) -- (c1) -- (b1);
                    
                        \node (i) at (0, -1) {(i)};
                    
                        \node[shape=circle,draw=black] (a2) at (1.5, 5) { };
                        \node at (a2) {$v_{i, j+1}$};
                        \node[shape=circle,draw=black] (d2) at (1.5, 3.5) { };
                        \node[shape=circle,draw=black] (c2) at (1.5, 1.5) { };
                        \node[shape=circle,draw=black] (b2) at (1.5, 0) {$v_{i, j}$}; 
                        \draw[-, thick, draw=black, double, double distance between line centers=8pt] (a2) -- (d2) -- (c2);
                    
                        \node (ii) at (1.5, -1) {(ii)};
                    
                        \node[shape=circle,draw=black] (a3) at (3, 5) { };
                        \node at (a3) {$v_{i, j+1}$};
                        \node[shape=circle,draw=black] (d3) at (3, 3.5) { };
                        \node[shape=circle,draw=black] (c3) at (3, 1.5) { };
                        \node[shape=circle,draw=black] (b3) at (3, 0) {$v_{i, j}$}; 
                        \draw[-, thick, draw=black, double, double distance between line centers=8pt] (b3) -- (c3) -- (d3);
                    
                        \node (iii) at (3, -1) {(iii)};
                    
                        \node[shape=circle,draw=black] (a4) at (4.5, 5) { };
                        \node at (a4) {$v_{i, j+1}$};
                        \node[shape=circle,draw=black] (d4) at (4.5, 3.5) { };
                        \node[shape=circle,draw=black] (c4) at (4.5, 1.5) { };
                        \node[shape=circle,draw=black] (b4) at (4.5, 0) {$v_{i, j}$}; 
                        \draw[-, thick, draw=black, double, double distance between line centers=8pt] (a4) -- (d4);
                        \draw[-, thick, draw=black, double, double distance between line centers=8pt] (b4) -- (c4);
                    
                        \node (iv) at (4.5, -1) {(iv)};
                    
                        \node[shape=circle,draw=black] (a5) at (6, 5) { };
                        \node at (a5) {$v_{i, j+1}$};
                        \node[shape=circle,draw=black] (d5) at (6, 3.5) { };
                        \node[shape=circle,draw=black] (c5) at (6, 1.5) { };
                        \node[shape=circle,draw=black] (b5) at (6, 0) {$v_{i, j}$}; 
                        \draw[-, thick, draw=black, double, double distance between line centers=8pt] (a5) -- (d5) -- (c5) -- (b5);
                    
                        \node (v) at (6, -1) {(v)};
                    
                        \node[shape=circle,draw=black] (a6) at (7.5, 5) { };
                        \node at (a6) {$v_{i, j+1}$};
                        \node[shape=circle,draw=black] (d6) at (7.5, 3.5) { };
                        \node[shape=circle,draw=black] (c6) at (7.5, 1.5) { };
                        \node[shape=circle,draw=black] (b6) at (7.5, 0) {$v_{i, j}$}; 
                    
                        \node (vi) at (7.5, -1) {(vi)};
                    
                        \node (left) at (-0.5, 0) { };
                        \node (right) at (8, 0) { };
                    
                    \end{tikzpicture}
            }
            \subcaption{Vertical configurations} 
            \label{fig:vertical_action}
        \end{minipage}
        \hfill
        \begin{minipage}{0.49\textwidth}
            \centering
            \resizebox{\textwidth}{!}
            {
                \begin{tikzpicture}

                    \tikzset{minimum size=34pt}
                    
                        \node[shape=circle,draw=black] (al1) at (0, 5) { };
                        \node at (al1) {\small $v_{i, j}$}; 
                        \node[shape=circle,draw=black] (ar1) at (2, 5) { };
                        \node at (ar1) {\small $v_{i+1, j}$}; 
                        \node (1) at (1, 4.0) {(i)};
                    
                        \node[shape=circle,draw=black] (al2) at (0, 2.5) { };
                        \node at (al2) {\small $v_{i, j}$}; 
                        \node[shape=circle,draw=black] (ar2) at (2, 2.5) { };
                        \node at (ar2) {\small $v_{i+1, j}$}; 
                        \draw[-, thick] (al2) -- (ar2);
                        \node (2) at (1, 1.5) {(ii)};
                    
                        \node[shape=circle,draw=black] (al3) at (0, 0) { };
                        \node at (al3) {\small $v_{i, j}$}; 
                        \node[shape=circle,draw=black] (ar3) at (2, 0) { };
                        \node at (ar3) {\small $v_{i+1, j}$}; 
                        \draw[-, thick, draw=black, double, double distance between line centers=8pt] (al3) -- (ar3);
                        \node (3) at (1, -1) {(iii)};
                    
                        \node (4) at (-3, 0) { };
                        \node (5) at (5.5, 0) { };
                    
                    \end{tikzpicture}
            }
            \subcaption{Horizontal configurations}
            \label{fig:horizontal_action}
        \end{minipage}
    \caption{The six possible vertical and three possible horizontal edge configurations that can appear in a minimal tour subgraph. (a)~Vertical edge configurations within each subaisle and (b)~horizontal edge configurations between adjacent aisles, with $(i)$--$(iii)$ denoting no edge, one edge, and two parallel edges, respectively}
    \label{fig:configurations}
    \end{figure}

The different sizes of rectangular warehouses are defined by the
number of cross-aisles:
\emph{single-block} ($n=2$),
\emph{two-block} ($n=3$),
and \emph{multi-block} ($n > 3$).
We index the aisles by $i=0,\ldots,m-1$ from left to right
and the horizontal cross-aisles by $j=0,1,\dots,n-1$ from bottom to top.
Accordingly, each aisle $i$ contains vertices $v_{i,0},v_{i,1},\dots,v_{i,n-1}$
on its $n$ horizontal cross-aisles, and $v_{i,0}$ and $v_{i,n-1}$ denote the boundary cross-aisles.
For any two vertices $v_1, v_2 \in V$,
let $m(v_1, v_2)$ denote the \emph{edge multiplicity},
i.e., the number of parallel edges directly between $v_1$ and $v_2$.

To state the proof of Proposition~\ref{prop:determinacy} precisely,
we formalize a local measure of how many horizontal edges
touch each aisle vertex.
Horizontal edges run only between adjacent aisles, so at $v_{i,j}$
the only possible horizontal connections are to $v_{i-1,j}$ and $v_{i+1,j}$.
Define the \emph{horizontal degree}
\begin{equation}
\label{eq:dH}
    d_H(v_{i,j}) := m(v_{i-1,j}, v_{i,j}) + m(v_{i,j}, v_{i+1,j}),
\end{equation}
with the convention that a term is $0$
when the corresponding neighbor
does not exist (i.e., for $i=0$ or $i=m-1$).

Following \citet{dunn2025double},
we call a double vertical edge \emph{connecting}
when both endpoints have $d_H>0$
(equivalently, when each endpoint is incident to at least one horizontal edge).
In particular, for rectangular warehouses,
\citet{dunn2025double} show that connecting
double vertical traversals are not
required to maintain connectivity of a minimal tour subgraph.

Finally, we spell out the six vertical configurations $(i)$--$(vi)$
in Figure~\ref{fig:vertical_action}, in order:
\begingroup
\renewcommand{\labelenumi}{(\roman{enumi})}
\begin{enumerate}
    \item \emph{1pass (single traversal).}
    One pass along the subaisle between vertices $v_{i,j}$ and $v_{i,j+1}$,
    visiting every pick in the segment.
    The tour enters at one end of the subaisle and exits at the other.
    \item \emph{Top.}
    Enter and exit through the upper cross-aisle $v_{i,j+1}$:
    collect picks reachable from that side and return to the same cross-aisle,
    leaving the lower portion of the subaisle untraveled.
    \item \emph{Bottom.}
    The mirror of $(ii)$ at the lower cross-aisle $v_{i,j}$:
    enter and exit there, leaving the upper portion untraveled.
    \item \emph{Gap.}
    The tour enters and exits at the lower cross-aisle $v_{i,j}$ and likewise
    enters and exits at the upper cross-aisle $v_{i,j+1}$,
    collecting every pick
    in the subaisle; the two excursions are chosen so as to maximize the vertical
    distance left untraveled between the outermost visited picks (a middle gap).
    \item \emph{2pass (double traversal).}
    The full subaisle is traversed twice (e.g.\ once in each direction along the segment).
    \item \emph{None (void).}
    No vertical travel in the subaisle: the tour does not enter it.
    This applies only when there are no pick locations to visit in that subaisle.
\end{enumerate}
\endgroup

The three horizontal configurations in Figure~\ref{fig:horizontal_action}
describe the multiplicity of horizontal edges between two adjacent aisles at a
fixed cross-aisle: $(i)$~no edge, $(ii)$~a single edge, and $(iii)$~two parallel
edges.
At an interior vertex $v_{i,j}$, each incident aisle-to-aisle link contributes
$0$, $1$, or $2$ to $d_H(v_{i,j})$ according to the configuration on that link,
so $d_H$ is the sum of two such terms; at a boundary aisle, only one link
appears in~\eqref{eq:dH}.
In particular $d_H(v_{i,j})$ can be zero, odd, or even, which is the distinction
used in the proof below.

A \emph{merged subaisle segment}
is a combined segment between $v_{i,j}$ and $v_{i,k}$
(with $k > j + 1$)
such that every intermediate vertex $v_{i,\ell}$
with $j < \ell < k$
satisfies $d_H(v_{i,\ell})=0$
(equivalently, has no incident horizontal edges).
In other words, the segment is entered and exited only at its endpoints.

\begin{lem}[Subaisle Merging]
\label{lem:merging}
The admissible vertical configurations for a single subaisle
apply to a merged subaisle segment.
\end{lem}

\begin{figure}[ht]
\centering
\begin{tikzpicture}[scale=1]

\def\vSpacing{2.5}
\def\hOffset{7.5}
\def\nodeSize{10mm}
\def\boxWidth{1.5}

\tikzset{
  hEdge/.style={-, draw=black, line width=1.2pt, dashed}
}

\pgfmathsetmacro{\yZero}{-3*2.5}
\pgfmathsetmacro{\yOne}{-2*2.5}
\pgfmathsetmacro{\yTwo}{-1*2.5}
\pgfmathsetmacro{\yThree}{0}

\pgfmathsetmacro{\nodeRadius}{0.5}

\pgfmathsetmacro{\boxBottomZero}{\yZero + \nodeRadius}
\pgfmathsetmacro{\boxTopZero}{\yOne - \nodeRadius}

\pgfmathsetmacro{\boxBottomOne}{\yOne + \nodeRadius}
\pgfmathsetmacro{\boxTopOne}{\yTwo - \nodeRadius}

\pgfmathsetmacro{\boxBottomTwo}{\yTwo + \nodeRadius}
\pgfmathsetmacro{\boxTopTwo}{\yThree - \nodeRadius}

\draw[hEdge] (-2.5, \yThree) -- (2.5, \yThree);
\draw[hEdge] (-2.5, \yZero) -- (2.5, \yZero);

\fill[gray!20] (-0.5*\boxWidth, \boxBottomZero) rectangle (0.5*\boxWidth, \boxTopZero);
\fill[gray!20] (-0.5*\boxWidth, \boxBottomOne) rectangle (0.5*\boxWidth, \boxTopOne);
\fill[gray!20] (-0.5*\boxWidth, \boxBottomTwo) rectangle (0.5*\boxWidth, \boxTopTwo);

\node[circle, draw=black, fill=white, minimum size=10mm] (Lv0) at (0, \yZero) {$v_{i,0}$};
\node[circle, draw=black, fill=white, minimum size=10mm] (Lv1) at (0, \yOne) {$v_{i,1}$};
\node[circle, draw=black, fill=white, minimum size=10mm] (Lv2) at (0, \yTwo) {$v_{i,2}$};
\node[circle, draw=black, fill=white, minimum size=10mm] (Lv3) at (0, \yThree) {$v_{i,3}$};

\node[draw=none, align=center] at (0, 1.5) 
    {(a) Individual subaisles};

\draw[hEdge] (\hOffset-2.5, \yThree) -- (\hOffset+2.5, \yThree);
\draw[hEdge] (\hOffset-2.5, \yZero) -- (\hOffset+2.5, \yZero);

\pgfmathsetmacro{\mergedBoxBottom}{\yZero + \nodeRadius}
\pgfmathsetmacro{\mergedBoxTop}{\yThree - \nodeRadius}
\fill[gray!20] (\hOffset-0.5*\boxWidth, \mergedBoxBottom) rectangle (\hOffset+0.5*\boxWidth, \mergedBoxTop);

\node[circle, draw=black, fill=white, minimum size=10mm] (Rv0) at (\hOffset, \yZero) {$v_{i,0}$};
\node[circle, draw=black, fill=white, minimum size=10mm] (Rv1) at (\hOffset, \yOne) {$v_{i,1}$};
\node[circle, draw=black, fill=white, minimum size=10mm] (Rv2) at (\hOffset, \yTwo) {$v_{i,2}$};
\node[circle, draw=black, fill=white, minimum size=10mm] (Rv3) at (\hOffset, \yThree) {$v_{i,3}$};

\node[draw=none, align=center] at (\hOffset, 1.5) 
    {(b) Merged segment};

\end{tikzpicture}
\vspace{0.5cm}
\caption{Illustration of subaisle merging (Lemma~\ref{lem:merging}).
Dashed lines show horizontal edges only at
the top and bottom vertices; gray shaded
areas illustrate the segments considered before and after merging.
(a) Individual subaisles between $v_{i,0}$ and $v_{i,3}$.
(b) Merged segment, where all subaisles between
$v_{i,0}$ and $v_{i,3}$ are treated as a single combined segment}
\label{fig:structural_merging}
\end{figure}

\begin{proof}[Proof of Lemma~\ref{lem:merging}]
By the definition of a merged subaisle segment,
every intermediate vertex between $v_{i,j}$ and $v_{i,k}$
has no incident horizontal edges.
Hence the segment can only be entered and exited via
$v_{i,j}$ and $v_{i,k}$, and there are no horizontal
connections that would require the tour to enter or exit
at intermediate points.
Given the admissible vertical configurations for a subaisle
(Figure~\ref{fig:vertical_action}),
the segment therefore behaves as a single unit,
and those same configurations apply to the merged segment as a whole.
Figure~\ref{fig:structural_merging} illustrates this concept,
showing how subaisles between $v_{i,0}$ and $v_{i,3}$ can be merged
when intermediate vertices $v_{i,1}$ and $v_{i,2}$ have no
incident horizontal edges.
\end{proof}


\section{Main Result}
\label{sec:main_result}

We now restrict our attention to
vertical configurations over merged subaisle
segments as permitted by Lemma~\ref{lem:merging}.
Within such merged segments,
\citet{dunn2025double} show that for rectangular warehouses,
connecting double edges (vertical configuration $(v)$)
are not required
to maintain connectivity of a minimal tour subgraph.
For the remainder of this section, we therefore
focus on minimal tour subgraphs in which configuration $(v)$ does not occur.
This restriction applies only to rectangular warehouses as
in non-rectangular geometries,
configuration $(v)$ may be needed
to keep the tour subgraph connected even when the degree-parity and
item-visiting constraints alone would allow an alternative pattern.

\begin{prop}[Deterministic Structure of Vertical Configurations]
\label{prop:determinacy}
Let $T$ be a minimal tour subgraph of a rectangular warehouse.
If all horizontal edges incident to the vertices
of some aisle are known, then the vertical edge configurations
of $T$ within that aisle are uniquely determined.
\end{prop}

\begin{figure}[ht]
\centering
\begin{tikzpicture}[scale=0.8, transform shape]

\def\nodeSize{10mm}
\def\nodeRadius{0.5}
\def\vSpacing{3.75}
\def\hSpacing{1.25}
\def\edgeLength{1.5}

\pgfmathsetmacro{\yZero}{-3*\vSpacing}
\pgfmathsetmacro{\yOne}{-2*\vSpacing}
\pgfmathsetmacro{\yTwo}{-1*\vSpacing}
\pgfmathsetmacro{\yThree}{0}

\pgfmathsetmacro{\grayYOne}{(\yZero + \yOne) / 2}
\pgfmathsetmacro{\grayYTwo}{(\yOne + \yTwo) / 2}
\pgfmathsetmacro{\grayYThree}{(\yTwo + \yThree) / 2}

\pgfmathsetmacro{\caseOneX}{0}
\pgfmathsetmacro{\caseTwoX}{4}
\pgfmathsetmacro{\caseThreeX}{8}
\pgfmathsetmacro{\caseFourX}{12}

\tikzset{
  sng/.style={-, line width=1pt}
}
\tikzset{
  dbl/.style={-, line width=1pt, double, double distance=4pt}
}

\node[circle, draw=black, fill=white, minimum size=\nodeSize] (C1v0) at (\caseOneX, \yZero) {$v_{i,0}$};
\node[circle, draw=black, fill=white, minimum size=\nodeSize] (C1v1) at (\caseOneX, \yOne) {$v_{i,1}$};
\node[circle, draw=black, fill=white, minimum size=\nodeSize] (C1v2) at (\caseOneX, \yTwo) {$v_{i,2}$};
\node[circle, draw=black, fill=white, minimum size=\nodeSize] (C1v3) at (\caseOneX, \yThree) {$v_{i,3}$};
\node[circle, draw=black, fill=gray!50, minimum size=\nodeSize] (C1g1) at (\caseOneX, \grayYOne) {};
\node[circle, draw=black, fill=gray!50, minimum size=\nodeSize] (C1g2) at (\caseOneX, \grayYTwo) {};
\node[circle, draw=black, fill=gray!50, minimum size=\nodeSize] (C1g3) at (\caseOneX, \grayYThree) {};
\draw[sng] (\caseOneX-\edgeLength, \yZero) -- (\caseOneX-\nodeRadius, \yZero);
\draw[sng] (\caseOneX+\nodeRadius, \yOne) -- (\caseOneX+\edgeLength, \yOne);
\draw[sng] (\caseOneX+\nodeRadius, \yTwo) -- (\caseOneX+\edgeLength, \yTwo);
\draw[sng] (\caseOneX-\edgeLength, \yThree) -- (\caseOneX-\nodeRadius, \yThree);
\draw[sng] (\caseOneX, \yZero+\nodeRadius) -- (\caseOneX, \grayYOne-\nodeRadius);
\draw[sng] (\caseOneX, \grayYOne+\nodeRadius) -- (\caseOneX, \yOne-\nodeRadius);
\draw[sng] (\caseOneX, \yTwo+\nodeRadius) -- (\caseOneX, \grayYThree-\nodeRadius);
\draw[sng] (\caseOneX, \grayYThree+\nodeRadius) -- (\caseOneX, \yThree-\nodeRadius);
\node[draw=none, align=center] at (\caseOneX, 1.2) {Case 1};

\node[circle, draw=black, fill=gray!50, minimum size=\nodeSize] (C2v0) at (\caseTwoX, \yZero) {$v_{i,0}$};
\node[circle, draw=black, fill=white, minimum size=\nodeSize] (C2v1) at (\caseTwoX, \yOne) {$v_{i,1}$};
\node[circle, draw=black, fill=white, minimum size=\nodeSize] (C2v2) at (\caseTwoX, \yTwo) {$v_{i,2}$};
\node[circle, draw=black, fill=white, minimum size=\nodeSize] (C2v3) at (\caseTwoX, \yThree) {$v_{i,3}$};
\node[circle, draw=black, fill=gray!50, minimum size=\nodeSize] (C2g1) at (\caseTwoX, \grayYOne) {};
\node[circle, draw=black, fill=gray!50, minimum size=\nodeSize] (C2g2) at (\caseTwoX, \grayYTwo) {};
\node[circle, draw=black, fill=gray!50, minimum size=\nodeSize] (C2g3) at (\caseTwoX, \grayYThree) {};
\draw[dbl] (\caseTwoX, \yZero+\nodeRadius) -- (\caseTwoX, \grayYOne-\nodeRadius);
\draw[dbl] (\caseTwoX, \grayYOne+\nodeRadius) -- (\caseTwoX, \yOne-\nodeRadius);
\draw[dbl] (\caseTwoX, \yOne+\nodeRadius) -- (\caseTwoX, \grayYTwo-\nodeRadius);
\draw[dbl] (\caseTwoX, \grayYTwo+\nodeRadius) -- (\caseTwoX, \yTwo-\nodeRadius);
\draw[dbl] (\caseTwoX, \yTwo+\nodeRadius) -- (\caseTwoX, \grayYThree-\nodeRadius);
\node[draw=none, align=center] at (\caseTwoX, 1.2) {Case 2};

\node[circle, draw=black, fill=white, minimum size=\nodeSize] (C3v0) at (\caseThreeX, \yZero) {$v_{i,0}$};
\node[circle, draw=black, fill=white, minimum size=\nodeSize] (C3v1) at (\caseThreeX, \yOne) {$v_{i,1}$};
\node[circle, draw=black, fill=white, minimum size=\nodeSize] (C3v2) at (\caseThreeX, \yTwo) {$v_{i,2}$};
\node[circle, draw=black, fill=white, minimum size=\nodeSize] (C3v3) at (\caseThreeX, \yThree) {$v_{i,3}$};
\node[circle, draw=black, fill=gray!50, minimum size=\nodeSize] (C3g1) at (\caseThreeX, \grayYOne) {};
\node[circle, draw=black, fill=gray!50, minimum size=\nodeSize] (C3g2) at (\caseThreeX, \grayYTwo) {};
\node[circle, draw=black, fill=gray!50, minimum size=\nodeSize] (C3g3) at (\caseThreeX, \grayYThree) {};
\draw[dbl] (\caseThreeX-\edgeLength, \yZero) -- (\caseThreeX-\nodeRadius, \yZero);
\draw[dbl] (\caseThreeX+\nodeRadius, \yZero) -- (\caseThreeX+\edgeLength, \yZero);
\draw[dbl] (\caseThreeX-\edgeLength, \yThree) -- (\caseThreeX-\nodeRadius, \yThree);
\draw[dbl] (\caseThreeX+\nodeRadius, \yThree) -- (\caseThreeX+\edgeLength, \yThree);
\draw[dbl] (\caseThreeX, \yThree-\nodeRadius) -- (\caseThreeX, \grayYThree+\nodeRadius);
\draw[dbl] (\caseThreeX, \yZero+\nodeRadius) -- (\caseThreeX, \grayYOne-\nodeRadius);
\draw[dbl] (\caseThreeX, \grayYOne+\nodeRadius) -- (\caseThreeX, \yOne-\nodeRadius);
\draw[dbl] (\caseThreeX, \yOne+\nodeRadius) -- (\caseThreeX, \grayYTwo-\nodeRadius);
\node[draw=none, align=center] at (\caseThreeX, 1.2) {Case 3};

\node[circle, draw=black, fill=white, minimum size=\nodeSize] (C4v0) at (\caseFourX, \yZero) {$v_{i,0}$};
\node[circle, draw=black, fill=white, minimum size=\nodeSize] (C4v1) at (\caseFourX, \yOne) {$v_{i,1}$};
\node[circle, draw=black, fill=white, minimum size=\nodeSize] (C4v2) at (\caseFourX, \yTwo) {$v_{i,2}$};
\node[circle, draw=black, fill=white, minimum size=\nodeSize] (C4v3) at (\caseFourX, \yThree) {$v_{i,3}$};
\node[circle, draw=black, fill=gray!50, minimum size=\nodeSize] (C4g1) at (\caseFourX, \grayYOne) {};
\node[circle, draw=black, fill=gray!50, minimum size=\nodeSize] (C4g2) at (\caseFourX, \grayYTwo) {};
\node[circle, draw=black, fill=gray!50, minimum size=\nodeSize] (C4g3) at (\caseFourX, \grayYThree) {};
\draw[sng] (\caseFourX-\edgeLength, \yThree) -- (\caseFourX-\nodeRadius, \yThree);
\draw[sng] (\caseFourX+\nodeRadius, \yThree) -- (\caseFourX+\edgeLength, \yThree);
\draw[dbl] (\caseFourX, \yThree-\nodeRadius) -- (\caseFourX, \grayYThree+\nodeRadius);
\draw[dbl] (\caseFourX, \grayYThree-\nodeRadius) -- (\caseFourX, \yTwo+\nodeRadius);
\draw[dbl] (\caseFourX, \yTwo-\nodeRadius) -- (\caseFourX, \grayYTwo+\nodeRadius);
\draw[dbl] (\caseFourX, \grayYTwo-\nodeRadius) -- (\caseFourX, \yOne+\nodeRadius);
\draw[dbl] (\caseFourX, \yOne-\nodeRadius) -- (\caseFourX, \grayYOne+\nodeRadius);
\node[draw=none, align=center] at (\caseFourX, 1.2) {Case 4};

\end{tikzpicture}
\vspace{0.5cm}
\caption{Schematic illustration of the four cases in Proposition~\ref{prop:determinacy}
(vertex labels and depot placement are representative; Case~2 places the depot at an
end cross-aisle of the segment, as in the proof, with $(ii)$ or $(iii)$ depending on
whether that end is the top or bottom).
Case 1: Odd $d_H$ at vertices paired along the aisle, with single vertical edges
between consecutive pairs.
Case 2: $d_H\equiv 0$ on the aisle, with double edges from the depot end toward the picks.
Case 3: Even $d_H$ at both endpoints of the segment, with double edges from $v_{i,3}$
to the uppermost pick location and from $v_{i,0}$ to the second uppermost pick location.
Case 4: $d_H=0$ at one endpoint and positive even $d_H$ at the other, with a double edge from $v_{i,3}$
to the lowermost pick location.
Gray nodes represent pick locations (including the depot where shown)}
\label{fig:structural_determinacy}
\end{figure}

\noindent
Figure~\ref{fig:structural_determinacy} schematically
illustrates Cases~1--4 developed in the proof below;
readers may consult it alongside each paragraph.

\begin{proof}[Proof of Proposition~\ref{prop:determinacy}]

Given the set of horizontal edges
incident to a particular aisle in
a minimal tour subgraph, Theorem~\ref{thm:subtour} requires
that vertical configurations ensure all items $P$ are visited,
$T$ is connected, and every vertex in $T$ has even degree.

The key insight is that once horizontal edges are fixed,
the degree parity at each vertex is determined,
and the Eulerian condition (even degree at all vertices)
uniquely constrains the vertical edge configurations.
Merging subaisles according to Lemma~\ref{lem:merging},
we first resolve vertices with odd horizontal degree along the entire
aisle (Case~1).
The remaining cases partition the parity patterns at the two endpoints
of each merged segment (Cases~2--4).

\paragraph{Case 1: Vertices with odd horizontal degree.}
Between any two aisles, horizontal edges appear in even multiplicity
\citep{ratliff1983order}, so along a fixed aisle the vertices with odd~$d_H$
are even in number (possibly none).
For Eulerian parity, each odd-$d_H$ vertex needs an odd count of incident
vertical edges and each even-$d_H$ vertex an even count.
In Figure~\ref{fig:vertical_action}, only configuration~$(i)$ contributes a
single vertical edge at a subaisle endpoint; double-edge vertical patterns add
even multiplicity there and cannot offset odd~$d_H$.

Pair odd-$d_H$ vertices consecutively from bottom to top and connect each pair
by one~$(i)$.
This is an aisle-wide construction: the path may cross several merged segments
and intermediate vertices with $d_H=0$ or even~$d_H$, each of which picks up
two vertical incidences along the walk and remains even overall.
A nested pairing (two odds joined with another odd between them on the aisle)
forces extra vertical travel that visits no new picks and, under uniform
cross-aisle spacing, strictly more length than consecutive pairing.
Hence a minimal tour uses~$(i)$ exactly between successive odd-$d_H$ vertices,
and Theorem~\ref{thm:subtour} holds.

\medskip

\noindent
Cases~2--4 complete the vertical pattern: Case~2 is the degenerate situation
with $d_H\equiv 0$ at every aisle vertex of $T$; Cases~3 and~4 classify each
remaining merged segment by the pair of $d_H$ values at its endpoints.

\paragraph{Case 2: \texorpdfstring{$d_H\equiv 0$ on the aisle}{Zero horizontal degree on the aisle}.}
This occurs only if the depot and all items are in the same aisle,
meaning no horizontal travel is needed.
We may assume that $T$ meets only this aisle, since no pick lies elsewhere
and horizontal edges are absent.
The minimal configuration must connect the depot
to all pick locations using vertical edges only.
Then $d_H(v_{i,j})=0$ for every aisle vertex $v_{i,j}$ of $T$, and
Eulerian parity forces $T$ to use a single vertical cycle
within this aisle.
Because horizontal movement is absent, minimality implies we
must reach the farthest pick location and return, so the minimal
configuration adds vertical edges connecting the depot to the
farthest item.
If the depot is in the bottom cross-aisle, this corresponds
to configuration~$(iii)$, and
if it is in the top cross-aisle ($j=n-1$),
to configuration~$(ii)$.

\paragraph{Case 3: \texorpdfstring{Even $d_H$ at both endpoints}{Even horizontal degree at both endpoints}.}
Let $v_{i,j}$ and $v_{i,k}$ be the endpoints of a merged segment.
When $d_H(v_{i,j})$ and $d_H(v_{i,k})$ are both even,
the horizontal contribution to total degree is already even at both endpoints.
No configuration $(i)$ is admissible here:
it would add exactly one vertical edge at each endpoint of the segment,
making the total degree at each endpoint odd because even $d_H$ plus one
vertical edge is odd.
The vertical configuration must therefore use double edges
to visit all pick locations while preserving even total degree.
If we enforce only the degree-parity and coverage constraints,
two parity-feasible patterns may arise, i.e.\
\emph{gap} $(iv)$ and \emph{double traversal} $(v)$.
The gap pattern $(iv)$ covers all required picks while maximizing the
untraveled middle section, hence minimizing vertical travel among
parity-feasible options.
The connecting double traversal $(v)$ traverses that middle section as
well, so it is strictly longer than $(iv)$ without
visiting additional required picks.
In rectangular warehouses, \citet{dunn2025double} show that such
connecting double edges are not required to maintain connectivity of
a minimal tour subgraph.
Therefore, within the rectangular setting considered in this paper,
configuration $(iv)$ is the unique minimal feasible choice.
In non-rectangular settings, the connectivity argument can break down,
and both configurations $(iv)$ and $(v)$ would need to be considered.

\paragraph{Case 4: \texorpdfstring{$d_H=0$ at one endpoint and even $d_H>0$ at the other}
{One endpoint zero horizontal degree, the other positive even}.}
When one endpoint of the merged segment has $d_H=0$ and the other has
even $d_H>0$,
the $d_H=0$ endpoint cannot contribute to
the connectivity of the tour via cross-aisles.
The segment must therefore be accessed entirely from the
endpoint with $d_H>0$.
Since that endpoint already has even $d_H$,
a double edge is required to visit all pick locations
and return without changing its total-degree parity.
The minimal such configuration extends a double edge from
the endpoint with $d_H>0$ to the farthest pick location
and returns, corresponding to configuration $(ii)$
if that endpoint is at the top of the segment,
or $(iii)$ if it is at the bottom.
Any shorter double edge would fail to reach all pick locations and
any longer one would traverse beyond the farthest pick location
unnecessarily.
Thus the configuration is uniquely determined.

\medskip

\noindent
These four cases exhaust the relevant $d_H$ patterns:
Case~1 pairs every vertex with odd $d_H$ along the aisle;
Case~2 is the all-zero pattern $d_H\equiv 0$ on the aisle;
Cases~3 and~4 cover merged segments whose endpoints have two even values of
$d_H$ (Case~3), or one value $d_H=0$ and one positive even $d_H$ (Case~4, in
either order along the segment).
In each case, the vertical edge configuration is uniquely determined
by the requirement to minimize travel distance while satisfying
the Eulerian conditions and ensuring all pick locations are visited.
Therefore, once horizontal edges are fixed, vertical configurations
are deterministic.

\end{proof}

Figure~\ref{fig:structural_determinacy} previews the geometry of each case.
Table~\ref{tab:vertical_configuration_summary} lists the vertical configuration
forced by each horizontal-degree pattern in the proof of
Proposition~\ref{prop:determinacy}.
Once that pattern is fixed at segment endpoints, Eulerian parity and minimality
leave exactly one of $(i)$--$(iv)$, as in the table.
\begin{table}[ht]
\centering
\small
\setlength{\tabcolsep}{4pt}
\begin{tabular}{@{}l p{0.44\linewidth} l@{}}
\hline
& \multicolumn{1}{l}{Horizontal-degree pattern (merged segment or aisle)} & Vertical config. \\
\hline
Case 1 & Odd $d_H$ paired along aisle (see proof) & $(i)$ \\
Case 2 & $d_H\equiv 0$ on aisle (single-aisle) & $(ii)$ or $(iii)$ (depot) \\
Case 3 & Even $d_H$ at both endpoints & $(iv)$ \\
Case 4 & One endpoint $d_H{=}0$, other even $d_H>0$ & $(ii)$ or $(iii)$ ($d_H{>}0$) \\
\hline
\end{tabular}
\caption{Unique vertical configurations implied by horizontal degree $d_H$ at merged-segment endpoints (Cases~2--4) and along the aisle (Case~1)}
\label{tab:vertical_configuration_summary}
\end{table}

\vspace{0.5cm}


\section{Conclusion}
\label{sec:conclusion}

We have demonstrated that in rectangular warehouses,
the vertical edge configurations
in a minimal tour subgraph are uniquely determined by
the horizontal edge structure.
This deterministic relationship provides a deeper
understanding of optimal picker route structure
and eliminates the need to explore
vertical and horizontal configurations jointly.
Crucially, this characterization relies on our rectangular-warehouse
connectivity result \citep{dunn2025double}, which permits excluding
the double traversal vertical pattern $(v)$ when
determining the minimal vertical edges from the horizontal structure.

As existing dynamic programming algorithms
for the picker routing problem
alternate between horizontal and vertical stages
\citep{ratliff1983order, roodbergen2001routing, pansart2018exact},
this result implies that the vertical stages
can be replaced by a deterministic inference step,
potentially reducing the number of stages required.
More broadly, methods exist that formulate routing with edge
configurations as decision variables in rectangular warehouses
\citep{goeke2021modeling}; our result implies that such optimization models
can instead treat horizontal edges as the sole decision variables,
with vertical edges inferred directly,
reducing the combinatorial complexity of the problem
and providing a clearer framework for developing
exact methods for warehouse layouts of any size.


\clearpage

\section*{Statements and Declarations}

\paragraph{Funding}
George Dunn was supported by an Australian Government
Research Training Program (RTP) Scholarship. No other
funding was received for this work.

\paragraph{Competing interests}
The authors have no competing interests to declare
that are relevant to the content of this article.

\paragraph{Data availability}
This work is purely theoretical and does not use
empirical datasets. No data were generated or analysed.

\paragraph{Author contributions}
George Dunn led the conceptualization, proofs,
and drafting of the manuscript.
The co-authors provided supervision, guidance on the research direction and
presentation, and critical review of the results.

\bibliography{bibliography}

\end{document}